\newcommand{\norm}[1]{\left\|{#1}\right\|}
\def\EPSS{\rm{EPSS}}
\def\SEPSS{\rm{SEPSS}}
\newcommand{\navierSEPSS}[3]{&{#1}&{#2}&{#3}}
\newcommand{\navierOTHER}[9]{&{#8}&{#2}&{#5}&{#9}&{#3}&{#6}&{#7}&{#1}&{#4}}
\def\EPSS{\rm{EPSS}}
\def\SEPSS{\rm{SEPSS}}
\def\trace{\rm{trace}}
\def\ind{\rm{index}}
\def\clnavier{\cline{2-15}}
\newtheorem{theorem}{\bf Theorem}
\newtheorem{lemma}{\bf Lemma}
\newtheorem{corollary}{\bf Corollary}
\newtheorem{definition}{\bf Definition}
\begin{document}
\title{\bf Semi-convergence of the EPSS method for singular generalized saddle point problems}
\author{\small\bf Mohsen Masoudi
                            ~and \small\bf Davod Khojasteh Salkuyeh\thanks{Corresponding author}\\  [2mm]
{\small \textit{Faculty of Mathematical Sciences, University of Guilan, Rasht, Iran }}\\
{\small \textit{Emails:  masoudi\_mohsen@phd.guilan.ac.ir,  khojasteh@guilan.ac.ir}}\\[1mm]
}
\date{}
\maketitle
\vspace{-0.5cm}

\noindent\rule{6.5in}{0.02in}\\
{\bf Abstract.}  Recently, in (M. Masoudi, D.K. Salkuyeh, An extension of positive-definite and skew-Hermitian splitting method for preconditioning of generalized saddle point problems, Computers \& Mathematics with Application, https://doi.org/10.1016/j.camwa.2019.10.030, 2019) an extension of the positive definite and skew-Hermitian splitting (EPSS) iteration method for nonsingular generalized saddle point problems has been presented. In this article, we study semi-convergence of the EPSS method for singular generalized saddle problems. 
Then a special case of EPSS (SEPSS) preconditioner is applied to the nonsingular generalized saddle point problems. Some numerical results are presented to show the effectiveness of the preconditioner. 
\\[-3mm]

\noindent{\it \footnotesize Keywords}: {\small Saddle point problems, EPSS method, semi-convergence, preconditioning.. }\\
\noindent
\noindent{\it \footnotesize AMS Subject Classification}: 65F08, 65F10. \\
\noindent\rule{6.5in}{0.02in}

\pagestyle{myheadings}\markboth{M. Masoudi, D.K. Salkuyeh}{ On the semi-convergence of the EPSS}
\thispagestyle{empty}

\section{Introduction} 
Consider the  generalized saddle point problems  of the form
\begin{equation}
\label{saddle}
\mathcal{A}u= 
\begin{bmatrix}{A}&{B^*}\\{-B}&{C}\end{bmatrix}
\begin{bmatrix}{x}\\{y}\end{bmatrix}= 
\begin{bmatrix}{f}\\{g}\end{bmatrix}
\equiv b,
\end{equation}
where $A \in \mathbb{C}^{n\times n}$ is non-Hermitian positive definite,  i.e, $A+A^*$ is Hermitian positive definite, and $C \in \mathbb{C}^{m\times m}$ is positive semi-definite,  i.e,  $C+C^*$ is Hermitian positive semi-definite. Moreover  $f \in \mathbb {C}^n$,  $g \in \mathbb{C}^m$ and $ B \in  \mathbb{C}^{m \times n} (m\leq n)$  is rank deficient.
We also assume that the matrices $A$, $B$ and $C$ are large and sparse.
The  generalized saddle point problem  can be found in a variety of scientific and engineering problems;
e.g., computational fluid dynamics, constrained optimization, etc.  (see \cite{bai752006,benzi142005}).

Since the nonsingular coefficient matrix of \eqref{saddle} has some good properties, one approach in the literature is to drop
some elements from matrix $B$ in order to eliminate its singularity (see \cite{benzi142005,simoncini262004}) and the other approach is to employ some special techniques in the modelling process so the resulted problem is directly nonsingular. For example, in the field of
electric networks, the rank deficiency in $B$ can be removed by grounding one of the nodes, however, after this operation, the
resulting linear systems maybe rather ill-conditioned, see \cite{bochev472005} for details. In addition, Cao \cite{cao1742006} has compared the convergence
performance of Krylov subspace methods for solving singular saddle point problem \eqref{saddle} and the corresponding nonsingular
saddle point problem by some numerical experiments, and found that the convergence behavior of the singular case is
significantly better than that of the corresponding nonsingular case and the reason why it has such difference is still an open
problem. Therefore, we will not transfer some singular cases into nonsingular cases directly. 

For nonsingular generalized saddle point problem, a number of iteration methods have been developed in the literature, such as the SOR-like method \cite{golub412001},  {the GSOR method \cite{bai1022005}},
the Uzawa method \cite{bramble341997},
the parametrized inexact Uzawa methods \cite{bai4282008},
the Hermitian and skew-Hermitian splitting
(HSS)   methods \cite{benzi262004, 	simoncini262004} and so on.
Also,  when the generalized saddle point problem \eqref{saddle} is singular, some iteration methods
and preconditioning techniques have been presented, 
such as the generalized parametrized inexact Uzawa methods \cite{yang692015}, 
HSS  method \cite{bai892010}, 
regularized HSS  method \cite{chao762018}, 
preconditioned deteriorated PSS  method \cite{liang782018}, 
relaxed deteriorated PSS preconditioner \cite{liang3052017}, 
GSS  method and preconditioner \cite{cao712016, huang3282018, chen2692015, shen722016}, 
inexact version of the GSS  method  \cite{huang372018}, modification of the GSS method \cite{salkuyeh742017}, 
constraint preconditioning  method \cite{yang2822015,zhang2382013} and so on. 

The PSS iteration method was presented by Bai et al. in \cite{bai262005} for the solution of non-Hermitian positive-definite linear systems. The PSS preconditioner is generated by the PSS iteration method.
In \cite{bai262005} it was  shown that the PSS iteration method converges unconditionally to the unique solution of the non-Hermitian system of linear equations. A generalization of the PSS iteration method was presented by Cao et al. in \cite{cao22012}. In \cite{pan1722006}, Pan et al. proposed the deteriorated positive-definite and skew-Hermitian splitting (DPSS) preconditioner for saddle point problems with $(2,2)$-block being zero. Then, Shen in \cite{shen2372014} applied the method to the generalized saddle point problems. Fan and Zhu in \cite{fan2582015} proposed  a  generalized relaxed positive-definite and skew-Hermitian splitting preconditioner for non-Hermitian saddle point problems. Xie and Ma in \cite{xie722016} presented the modified PSS preconditioner for generalized saddle point problems.

In this article, we study semi-convergence of the extended PSS (EPSS)  iteration method \cite{mas2019} for solving singular generalized saddle point problems. We show that the EPSS method is unconditionally semi-convergent for the singular generalized saddle point problems \eqref{saddle}. Since the EPSS iteration method is a general case of some other methods, we conclude that all of these iteration methods are semi-convergent. We also study the EPSS induced preconditioner.

{The following notations are used throughout this paper. The set of all
	$n \times n$ complex matrices and $n \times 1 $ complex  vectors are denoted by $\mathbb{C}^{n\times n} $ and $\mathbb{C}^n$, respectively.	
	The symbol $I$ denotes the identity matrix. 
	Notation ${A^*}$ is used for   the conjugate transpose of the matrix $A$.
	For a given square matrix $A$, $\sigma (A)$ stands for the spectrum of the matrix $A $.
	We denote the  spectral radius  of the matrix $A$ by $\rho(A)$ which is  defined by  $\rho(A)=\max \{|\lambda|: \lambda \in \sigma(A)\}.$
	The notation $null(A)$ is used to represent the null space of  the matrix $A$.
	We say that the matrix $A\in \mathbb{C}^{n\times n}$ is Hermitian positive-definite (HPD), if $A^*=A$ and  $x^* A x>0$  for all nonzero  $x \in \mathbb{C}^n$. Similarly, the matrix $A\in \mathbb{C}^{n\times n}$ is called  Hermitian positive semidefinite (HPSD), if $A^*=A$ and $x^* A x\geqslant 0$ for all  $x \in \mathbb{C}^n$.
	The matrix $A\in \mathbb{C}^{n\times n}$ is said to be positive-definite (positive semidefinite) if $A+A^*$ is HPD (HPSD).
	Throughout the paper we use PD (PSD)  to denote positive-definite (positive semidefinite) matrices.   
	If  $S^*=-S$, we say that $S$ is  a  skew-Hermitian matrix. For a complex number $z$, the real part and imaginary part of $z$ are denoted by $\Re(z)$ and $\Im(z)$, respectively. For two vectors $u$ and $v$, we use $[u;v]$ to denote $[u^T,v^T]^T$.
}

\section{Semi-convergence of EPSS iteration method}
\label{sectionepsssad}

We first give a brief description of the EPSS method \cite{mas2019}  for the generalized saddle point problems. 
Let  $A=A_P+A_S$ be a positive definite and skew-Hermitian  splitting of the matrix $A$, $C=C_P+C_S$ be  positive semi-definite  and skew-Hermitian  splitting of the matrix $C$ and   $B=B_P+B_S$ is an arbitrary splitting of the matrix $B$.
Let 
\begin{align}
\label{P,S}
{\mathcal P}=
\begin{bmatrix}{A_P}&{B_P^*}\\{-B_P}&{C_P} \end{bmatrix}, ~~
{\mathcal S}
=
\begin{bmatrix}{A_S}&{B_S^*}\\{-B_S}&{C_S}\end{bmatrix}, ~~
\Sigma=
\begin{bmatrix}{P_\alpha}&{0}\\{0}&{P_\beta}\end{bmatrix}.
\end{align}
where  $P_\alpha$ and $P_\beta$  are HPD matrices. Obviously we have  $\mathcal{A}= {\mathcal P}+ {\mathcal S}$. Using the shift matrix $\Sigma$ which is HPD and the splittings
\[
\mathcal{A} =  (\Sigma+ {\mathcal P}  ) -  (\Sigma - \mathcal S  )= ( \Sigma +  \mathcal S ) -  (\Sigma - {\mathcal P} ).
\]
we establish the EPSS iteration method as
\begin{align}
\label{seq}
\begin{cases}
(\Sigma+{\mathcal P}  ) {u}^{k+\frac{1}{2}}= (\Sigma - \mathcal S  )  {u}^{k}+{b},\\
(\Sigma +\mathcal S  ) {u}^{k+1}= ( \Sigma -{\mathcal P}  )  {u}^{k+\frac{1}{2}}+{b},	
\end{cases}
\end{align}
where $\mathbf{u}^0\in \mathbb{C}^n$ is a given initial guess. Eliminating ${u}^{k+\frac{1}{2}}$ from \eqref{seq} yields
\begin{align}
\label{EPSS}
{u}^{k+1}= {\Gamma}_{\EPSS} {u}^k+{c},
\end{align}
where
\begin{align}
\label{gama}
{\Gamma}_{\EPSS}= ( \Sigma +\mathcal  S  )^{-1}  ( \Sigma -{\mathcal P}   ) ( \Sigma + {\mathcal P}   )^{-1}  ( \Sigma  - \mathcal  S  ),
\end{align}
and ${c}=2  ( \Sigma + \mathcal  S  )^{-1} \Sigma   ( \Sigma + {\mathcal P}    )^{-1} \mathbf{b}$.
If we define
\begin{align}
\label{MN}
{ \mathcal{M}}= \frac{1}{2}  ( \Sigma +{\mathcal P}  )\Sigma ^{-1} ( \Sigma + \mathcal  S  )\quad \textrm{and} \quad
{\mathcal N}=\frac{1}{2}  (\Sigma -{\mathcal P}  )\Sigma ^{-1} ( \Sigma -\mathcal S  ),
\end{align}
then the matrix $\mathcal{M}$ is nonsingular and ${A}={\mathcal{M}}-{\mathcal{N}}$. Moreover, we have $\Gamma_{\EPSS}={\mathcal{M}}^{-1} {\mathcal{N}}=I-{\mathcal{M}}^{-1}\mathcal{A}$.

Therefore, if we use a Krylov subspace method such as GMRES or its restarted version \cite{saad71986} to approximate the solution of the system \eqref{saddle}, then the matrix $\mathcal{M}$ can be considered as a preconditioner to this system.
Since the  prefactor  $\frac{1}{2}$ in the preconditioner $ {\mathcal{M}}$  has no effect on the preconditioned system,  we can take the matrix
{\small
\begin{eqnarray}
\label{orgP}
\mathfrak{P}_{\EPSS}&=&\left( \Sigma +  {\mathcal P} \right)\Sigma ^{-1}\left( \Sigma + S \right)\nonumber\\
&=& 
\begin{bmatrix}{P_\alpha +A_P }&{B_P^*}\\{-B_P}&{P_\beta +C_P} \end{bmatrix}
\begin{bmatrix}{P_\alpha^{-1}}&{0}\\{0}&{P_\beta^{-1}} \end{bmatrix}
\begin{bmatrix}{P_\alpha +A_S  }&{B_S^*}\\{-B_S}&{P_\beta+C_S  }\end{bmatrix}
\nonumber\\
&=& 
\begin{bmatrix}
P_\alpha +A+A_PP_\alpha^{-1}A_S-B_P^*P_\beta ^{-1} B_S& (P_\alpha +A_P) P_\alpha^{-1} B_S^*+B^*_PP_\beta^{-1}(P_\beta+C_S)\\
-B_PP_\alpha^{-1}(P_\alpha +A_S)- (P_\beta +C_P)P_\beta^{-1}B_S  &P_\beta +C+C_P P_\beta^{-1}C_S-B_PP_\alpha ^{-1} B_S^*
\end{bmatrix}.
\end{eqnarray}}

The  convergence of the  EPSS method  for     nonsingular generalized saddle point problems  \eqref{saddle}  was studied, in \cite{mas2019}. 
In the following,   we study the semi-convergence of  this method for singular generalized saddle point problems. 

\begin{definition}
	Let $\mathcal{A}$ be singular. The iteration method \eqref{EPSS}  is called semi-convergent, if it  converges to a solution of the system $\mathcal{A} u=b$ for any initial guess $u^{0}$.
\end{definition}

\begin{lemma}  \cite{berman91994}
	The the  iteration method \eqref{EPSS} is semi-convergent, if and only if 
	\begin{enumerate}
		\item The elementary divisors of the iteration matrix $\Gamma$  associated with  $\lambda=1$ are linear, i.e., $ \ind(I-\Gamma ) = 1$, or equivalently, ${\rm rank}(I -\Gamma )^2 = {\rm rank}(I-\Gamma )$;  
		\item	The pseudo-spectral radius satisfies $\nu(\Gamma) <1$, where $ \nu(\Gamma)=\max \{|\lambda|: \lambda \in \sigma(\Gamma), \lambda\neq 1\}.$
	\end{enumerate}   
\end{lemma}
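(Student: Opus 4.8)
The plan is to recognize that this is the standard characterization of a \emph{semi-convergent matrix} specialized to the stationary iteration \eqref{EPSS}, and to prove it by isolating the eigenvalue $\lambda=1$ of $\Gamma$ through its Jordan canonical form. First I would unroll the recursion to obtain the closed form
\[
u^{k}=\Gamma^{k}u^{0}+\sum_{j=0}^{k-1}\Gamma^{j}c,
\]
so that semi-convergence for \emph{every} $u^{0}$ splits into two independent requirements: the existence of $\lim_{k\to\infty}\Gamma^{k}$ (the homogeneous part, which must converge for all initial data) and the existence of $\lim_{k\to\infty}\sum_{j=0}^{k-1}\Gamma^{j}c$ (the inhomogeneous part, which does not depend on $u^{0}$).

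For the homogeneous part I would pass to the Jordan form $\Gamma=TJT^{-1}$ and group the blocks into those belonging to $\lambda=1$ and the remaining ones. Since $\Gamma^{k}=TJ^{k}T^{-1}$, the limit exists if and only if every eigenvalue other than $\lambda=1$ has modulus strictly less than one, that is $\nu(\Gamma)<1$ (Condition~2), since a block with $|\lambda|=1$, $\lambda\neq1$ makes $\lambda^{k}$ oscillate without a limit, and if and only if every Jordan block attached to $\lambda=1$ is $1\times1$, that is $\lambda=1$ is semisimple, equivalently $\ind(I-\Gamma)=1$ (Condition~1), since a larger block produces entries of $J^{k}$ of binomial type $\binom{k}{\ell}$ that grow polynomially in $k$. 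Under both conditions $J^{k}$ tends to the spectral projection onto the $\lambda=1$ eigenspace, so $\Gamma^{k}$ converges.

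The delicate point, which I expect to be the main obstacle, is the inhomogeneous sum. Writing $\mathcal A=\mathcal M-\mathcal N$ with $\Gamma=I-\mathcal M^{-1}\mathcal A$ and $c=\mathcal M^{-1}b$, the $\lambda=1$ eigenspace of $\Gamma$ is exactly $\nul(\mathcal M^{-1}\mathcal A)=\nul(\mathcal A)$ because $\mathcal M$ is nonsingular. In the block splitting above Condition~1 makes the $\lambda=1$ part of $J$ equal to the identity, so $\sum_{j=0}^{k-1}J^{j}=\diag\!\left(kI,\ \sum_{j=0}^{k-1}J_{1}^{j}\right)$; the lower block converges to $(I-J_{1})^{-1}$ since $\rho(J_{1})=\nu(\Gamma)<1$, while the upper block $kI$ diverges. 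The resolution is consistency of the system: because $\mathcal A u=b$ has a solution $\hat u$, we have $c=\mathcal M^{-1}\mathcal A\hat u=(I-\Gamma)\hat u\in\mathrm{range}(I-\Gamma)$, and when $\lambda=1$ is semisimple one has the direct sum $\nul(I-\Gamma)\oplus\mathrm{range}(I-\Gamma)$, so the component of $c$ along the $\lambda=1$ eigenspace vanishes and the divergent $kI$ term is annihilated. Hence the partial sums converge.

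Finally I would close the loop. Passing to the limit $u^{\star}=\lim_{k\to\infty}u^{k}$ in \eqref{EPSS} gives $(I-\Gamma)u^{\star}=c$, i.e.\ $\mathcal M^{-1}\mathcal A u^{\star}=\mathcal M^{-1}b$, whence $\mathcal A u^{\star}=b$, so the limit is genuinely a solution of the singular system. For the converse, if the method semi-converges for every $u^{0}$, then subtracting the sequences generated by two different initial guesses shows that $\Gamma^{k}(u^{0}-v^{0})$ converges for arbitrary $u^{0}-v^{0}$, so $\lim_{k\to\infty}\Gamma^{k}$ exists as a matrix; by the homogeneous analysis this forces precisely Conditions~1 and~2, completing the equivalence.
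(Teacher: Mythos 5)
Your proposal is correct. Note, however, that the paper itself offers no proof of this lemma: it is quoted verbatim from Berman and Plemmons \cite{berman91994}, so there is no in-paper argument to compare against. Your Jordan-form proof is the standard one from that reference: reduce semi-convergence for all $u^{0}$ to existence of $\lim_{k\to\infty}\Gamma^{k}$ (which yields both conditions via the block analysis of $J^{k}$), and handle the inhomogeneous sum by observing that consistency places $c=(I-\Gamma)\hat u$ in $\mathrm{range}(I-\Gamma)$, which under $\ind(I-\Gamma)=1$ is a complement of $\nul(I-\Gamma)$, so the divergent $kI$ block never acts on $c$. You are right to flag consistency of $\mathcal{A}u=b$ as the one hypothesis the lemma leaves implicit; your use of it is exactly how the cited source proceeds, and the concluding step $(I-\Gamma)u^{\star}=c\Rightarrow\mathcal{A}u^{\star}=b$ together with the two-initial-guess argument for the converse closes the equivalence correctly.
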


\begin{theorem}
	\label{semiconvth}
	Suppose that $A$ and $C$ are  PD and PSD, respectively. Let  $\lambda \in \sigma(\Gamma_{\EPSS})$ with $|\lambda|=1$.
	If   for all $0\neq r \in null (C+C^*)$, we have 
	\begin{align}
	\label{condi2}
	r^*(P_\beta-B_S P_\alpha^{-1} B_P^*+C_SP_\beta^{-1} C_P)r\neq\frac{1+\lambda}{1-\lambda}r^*Cr,
	\end{align}
	then the EPSS method is   semi-convergent for any initial guess $x^ 0 $.    
\end{theorem}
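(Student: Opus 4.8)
The plan is to verify the two conditions of the preceding lemma for $\Gamma:=\Gamma_{\EPSS}$. First note that since $I-\Gamma=\mathcal{M}^{-1}\mathcal{A}$ with $\mathcal{M}$ nonsingular, the eigenvalue $\lambda=1$ of $\Gamma$ corresponds exactly to $\nul(\mathcal{A})$, which is nontrivial because $\mathcal{A}$ is singular; hence $1\in\sigma(\Gamma)$. I would then prove two things: (a) $\rho(\Gamma)\le 1$ together with semisimplicity of every eigenvalue of modulus $1$, which gives $\ind(I-\Gamma)=1$, i.e.\ condition~1 of the lemma; and (b) under \eqref{condi2} the only eigenvalue on the unit circle is $\lambda=1$, which gives $\nu(\Gamma)<1$, i.e.\ condition~2.

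For the structural part I would exploit the similarity $\Gamma\sim\hat\Gamma:=\mathcal{Q}\mathcal{R}$ obtained by conjugating with $\Sigma+\mathcal{S}$, where $\mathcal{Q}=(\Sigma-\mathcal{P})(\Sigma+\mathcal{P})^{-1}$ and $\mathcal{R}=(\Sigma-\mathcal{S})(\Sigma+\mathcal{S})^{-1}$. Because $A_S$ and $C_S$ are skew-Hermitian one checks that $\mathcal{S}^*=-\mathcal{S}$, while $\mathcal{P}+\mathcal{P}^*=\diag(2A_P,2C_P)$ is HPSD. Working in the inner product $\langle x,y\rangle_{\Sigma^{-1}}=x^*\Sigma^{-1}y$, these Cayley-type factors satisfy: $\mathcal{R}$ is an isometry and $\mathcal{Q}$ is a contraction, with the exact defect identity
\[
\|z\|_{\Sigma^{-1}}^2-\|\mathcal{Q}z\|_{\Sigma^{-1}}^2=2\,q^*(\mathcal{P}+\mathcal{P}^*)q,\qquad q=(\Sigma+\mathcal{P})^{-1}z .
\]
Consequently $\hat\Gamma$ is a $\Sigma^{-1}$-contraction, so $\rho(\Gamma)=\rho(\hat\Gamma)\le 1$; moreover a contraction in an inner-product norm admits no nontrivial Jordan block for a unit-modulus eigenvalue (otherwise $\|\hat\Gamma^k\|_{\Sigma^{-1}}$ would be unbounded, contradicting $\|\hat\Gamma^k\|_{\Sigma^{-1}}\le 1$). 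In particular $\lambda=1$ is semisimple, establishing condition~1.

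To extract \eqref{condi2} I would take an eigenpair $\hat\Gamma v=\lambda v$ with $|\lambda|=1$ and set $z=\mathcal{R}v$. Since $\mathcal{R}$ is an isometry, the chain $\|v\|_{\Sigma^{-1}}=\|\mathcal{Q}z\|_{\Sigma^{-1}}\le\|z\|_{\Sigma^{-1}}=\|v\|_{\Sigma^{-1}}$ forces equality in the defect identity, hence $(\mathcal{P}+\mathcal{P}^*)q=0$ for $q=(\Sigma+\mathcal{P})^{-1}z$. As $A_P$ is HPD this yields $q=[0;q_2]$ with $0\neq q_2\in\nul(C+C^*)$, so $C_Pq_2=0$. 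Substituting $q=[0;q_2]$ into $z=(\Sigma+\mathcal{P})q$ and $\lambda v=(\Sigma-\mathcal{P})q$, and inserting these into the identity $v-z=\mathcal{S}\Sigma^{-1}(v+z)$ (which follows from $v=(\Sigma+\mathcal{S})p$, $z=(\Sigma-\mathcal{S})p$), the second block row collapses, after left-multiplication by $q_2^*$ and use of $C_Pq_2=0$, precisely to the equality case of \eqref{condi2} with $r=q_2$, using $-\tfrac{1+\bar\lambda}{1-\bar\lambda}=\tfrac{1+\lambda}{1-\lambda}$ valid on $|\lambda|=1$, $\lambda\neq 1$. Therefore, if \eqref{condi2} holds strictly for every nonzero $r\in\nul(C+C^*)$, then $q_2=0$, forcing $v=0$, a contradiction. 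Hence no eigenvalue of modulus $1$ other than $\lambda=1$ exists, so $\nu(\Gamma)<1$ and condition~2 holds; combining (a) and (b) with the lemma gives semi-convergence.

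The main obstacle is the computation in the third step: one must push the block algebra through $v-z=\mathcal{S}\Sigma^{-1}(v+z)$ and verify that its second block row reduces \emph{exactly} to \eqref{condi2}, keeping careful track of the conjugates $\bar\lambda$ versus $\lambda$ and exploiting the cancellations coming from $C_Pq_2=0$ (these are what convert the raw term $q_2^*C_Sq_2$ into $q_2^*Cq_2$ and permit inserting the harmless $C_SP_\beta^{-1}C_P$ term so the form matches \eqref{condi2} verbatim). The semisimplicity claim also deserves the clean justification via boundedness of $\|\hat\Gamma^k\|_{\Sigma^{-1}}$. Finally, the positive-definiteness of $A$, hence of $A_P$, is essential: it is what kills the first block of $q$ and confines the whole degeneracy to $\nul(C+C^*)$, which is exactly the subspace over which \eqref{condi2} is imposed.
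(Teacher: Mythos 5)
Your strategy is correct, and after pushing the block algebra through I can confirm it closes; but it is a genuinely different route from the paper's. For the index-one condition the paper works directly with null spaces: it takes $q$ with $(\mathfrak{P}_{\EPSS}^{-1}\mathcal{A})^{2}q=0$, sets $r=\mathfrak{P}_{\EPSS}^{-1}\mathcal{A}q$, uses $\mathcal{A}r=0$ and the definiteness hypotheses to force $r_1=0$, $B^*r_2=0$, $Cr_2=0$, and then reads off $r_2^*(P_\beta+C_PP_\beta^{-1}C_P^*+B_PP_\alpha^{-1}B_P^*)r_2=0$ from the second block row of $\mathfrak{P}_{\EPSS}r=\mathcal{A}q$, whence $r=0$. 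Your Cayley-transform argument (conjugate $\Gamma_{\EPSS}$ to $\mathcal{Q}\mathcal{R}$, show $\mathcal{R}$ is a $\Sigma^{-1}$-isometry and $\mathcal{Q}$ a $\Sigma^{-1}$-contraction via the defect identity, then invoke boundedness of powers) replaces that computation with a norm estimate. It buys two things: it is uniform over all unit-modulus eigenvalues, and it delivers $\rho(\Gamma_{\EPSS})\le 1$, which is actually required for $\nu(\Gamma_{\EPSS})<1$ (eigenvalues of modulus greater than one must also be excluded) and which the paper obtains only implicitly through its appeal to Theorem 3.2 of \cite{mas2019}. For the second condition the paper again simply cites that theorem, whereas you rederive the eigenvalue relation from the equality case of the defect identity; your computation does land exactly on the equality case of \eqref{condi2}: with $q=[0;q_2]$ one gets $z=[B_P^*q_2;(P_\beta+C_P)q_2]$ and $\lambda v=[-B_P^*q_2;(P_\beta-C_P)q_2]$, and the second block row of $v-z=\mathcal{S}\Sigma^{-1}(v+z)$, left-multiplied by $q_2^*$ and combined with $-\tfrac{1+\bar\lambda}{1-\bar\lambda}=\tfrac{1+\lambda}{1-\lambda}$, gives \eqref{condi2} with equality for $r=q_2\neq 0$.

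Two small repairs are needed in your write-up, neither of which damages the argument. First, $\mathcal{P}+\mathcal{P}^*=\diag(A_P+A_P^*,\,C_P+C_P^*)$ rather than $\diag(2A_P,2C_P)$, since $A_P$ and $C_P$ need not be Hermitian; what you actually use is that this block-diagonal matrix is HPSD with HPD $(1,1)$-block, and that $C_P+C_P^*=C+C^*$ (because $C_S$ is skew-Hermitian), so $q_2$ does lie in ${\rm null}(C+C^*)$ as claimed. Second, your assertion $C_Pq_2=0$ does not follow from $(C_P+C_P^*)q_2=0$ unless $C_P$ is Hermitian, so you cannot use it; fortunately it is also unnecessary: in the second block row the term $-(\bar\lambda+1)C_Pq_2$ coming from $v-z$ and the term $(\bar\lambda+1)C_Sq_2$ coming from $C_SP_\beta^{-1}\cdot(\bar\lambda+1)P_\beta q_2$ combine on their own into $(\bar\lambda+1)Cq_2$, which is precisely what produces the $r^*Cr$ term in \eqref{condi2}. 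With those adjustments your proof is complete and, unlike the paper's, self-contained.
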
	
\begin{proof}  
	At first, we show    the elementary divisors of the iteration matrix $\Gamma_{EPSS}=I-\mathfrak {P}_{EPSS}^{-1} \mathcal{A}$  associated with  $\lambda=1$ are linear.
	To do so, it is enough to show that  $null\left((\mathfrak {P}_{EPSS}^{-1}\mathcal{A})^2\right)= null(\mathfrak {P}_{EPSS}^{-1}\mathcal{A})$ (see \cite{salkuyeh742017}).
	Since $ null(\mathfrak{P}_{EPSS}^{-1}\mathcal{A})\subseteq null\left((\mathfrak{P}_{EPSS}^{-1}\mathcal{A})^2\right)$, 
	it is sufficient to prove   
	\begin{align}
	\label{null}
	null\left((\mathfrak{P}_{EPSS}^{-1}\mathcal{A})^2\right)\subseteq null(\mathfrak{P}_{EPSS}^{-1}\mathcal{A}).
	\end{align} 
	Let $q=[q_1;q_2] \in null\left((\mathfrak{P}_{EPSS}^{-1}\mathcal{A})^2\right)$. 
	Therefore $(\mathcal{P}_{EPSS}^{-1}\mathcal{A})^2 q=0$. 
	Denote by $r=[r_1;r_2]=\mathfrak{P}_{EPSS}^{-1}\mathcal{A} q$. 
	Hence $\mathfrak{P}_{EPSS}^{-1}\mathcal{A}r=0$. To show \eqref{null}, we prove $r_1=0$ and $r_2=0$. 
	Since  $\mathfrak{P}_{EPSS}^{-1}\mathcal{A}r=0$, we have $\mathcal{A}r=0$. 
	Therefore 
	\begin{align}
	\label{aq}
	\begin{cases}
	Ar_1+B^*r_2=0,\\
	-Br_1+Cr_2=0.
	\end{cases}
	\end{align}
	By the first equation in \eqref{aq}, we have $Ar_1=-B^*r_2$. 
	So $r_1=-A^{-1}B^*r_2$. 
	By substituting $r_1$ in  the second equation in \eqref{aq}, we get $BA^{-1}B^*r_2+Cr_2=0$. 
	Hence  $r_2^*BA^{-1}B^*r_2+r_2^*Cr_2=0$. 
	Therefore 
	\begin{align*}
	r_2^*B(A^{-*}+A^{-1})B^*r_2+r_2^*(C+C^*)r_2=0.
	\end{align*}
	Since $A$ and $C$ are  PD and PSD, respectively, we conclude that $B^*r_2=0$  and $(C+C^*)r_2=0.$
	By the first equation in \eqref{aq}, we have $Ar_1=0$ and so $r_1=0$. 
	Substituting $r_1=0$ in  the second equation in \eqref{aq}, gives $Cr_2=0$. 
	Therefore, 
	$C^*r_2=-Cr_2=0$ and so $(C_P^*-C_S) r_2=0$. Hence 
	$C_P^*r_2=C_S r_2$.	
	By the definition of $r$, we have $\mathfrak{P}_{EPSS}r=\mathfrak{P}_{EPSS} [0;r_2]=\mathcal{A}q$. 
	Therefore, 
	\begin{align}
	\label{mq}
	\begin{cases}
	\left((P_\alpha +A_P) P_\alpha^{-1} B_S^*+B^*_PP_\beta^{-1}(P_\beta+C_S)\right)r_2=Aq_1+B^*q_2,\\
	(P_\beta +C+C_P P_\beta^{-1}C_S-B_PP_\alpha ^{-1} B_S^*)r_2=-Bq_1+Cq_2.
	\end{cases}
	\end{align}
	By multiplying $r_2^*$ to the second equation in \eqref{mq} and using $C_S r_2=C_P^*r_2$, we conclude  
	\begin{align*}
	r_2^*(P_\beta +C+C_P P_\beta^{-1}C_P^*-B_PP_\alpha ^{-1} B_S^*)r_2=-r_2^*Bq_1+r_2^*Cq_2.
	\end{align*}
	Since $B^*r_2=0$ and  $r_2^*C=(C^*r_2)^*=0$,  we obtain 
	$$r_2^*(P_\beta+C_P P_\beta^{-1}C_P^* -B_PP_\alpha ^{-1} B_S^*)r_2=0.$$ 
	By using $B_S=B-B_P$, we have 
	$$r_2^*(P_\beta+C_P P_\beta^{-1}C_P^* +B_PP_\alpha ^{-1} B_P^*-B_PP_\alpha ^{-1} B^*)r_2=0,$$ 
	which gives   
	$$r_2^*(P_\beta+C_P P_\beta^{-1}C_P^* +B_PP_\alpha ^{-1} B_P^*)r_2=0.$$ 
	Since $P_\alpha$ and $P_\beta$ are HPD, we obtain that  $r_2=0$. Hence $r=0$ and so $null\left((\mathfrak{P}_{EPSS}^{-1}\mathcal{A})^2\right)= null(\mathfrak{P}_{EPSS}^{-1}\mathcal{A})$. Therefore,  $\text{index}(I-\Gamma_{EPSS} ) = 1$. 
	
	Now we prove $\nu(\Gamma_{\EPSS})<1$.
	Let $ 1 \neq \lambda \in \sigma(\Gamma_{\EPSS})$ with  $|\lambda|=1$.
	Similar to Theorem 3.2 in \cite{mas2019},  we know that there exists $0\neq r \in null (C+C^*)$ such that 
	\begin{align*}
	r^*(P_\beta-B_S P_\alpha^{-1} B_P^*+C_SP_\beta^{-1} C_P)r =\frac{1+\lambda}{1-\lambda}r^*Cr,
	\end{align*}
	which is contradiction with \eqref{condi2}.
\end{proof}
\begin{corollary}
	\label{semiconvcor}
	Suppose that $A$ and $C$ are  PD and PSD, respectively. Then, if $ null (C+C^*)  \subseteq null (C)$ and one of the following conditions holds true, then EPSS method is   semi-convergent.  
	\begin{enumerate}
		\item
		\label{part1}
		If for all {$0\neq r\in  null(C+C^*)$}, we have
		\begin{align}
		\label{condi3}
		r^*(P_\beta+C_SP_\beta^{-1} C_S^*)r\neq r^*(B_S P_\alpha^{-1} B_P^*)r.
		\end{align}
		\item
		\label{part2}
		{$0\neq r\in  null(C+C^*)$} implies  $r^*(B_S P_\alpha^{-1} B_P^*)r \leqslant  0$.
		\item
		\label{part3}
		{If $null(C+C^*) \subseteq null(B_S^*) \cup null(B_P^*)$}.
		\item
		\label{part4}
		If $C$ is PD or one of the matrices $B_S$ or $B_P$ is equal to 0.
	\end{enumerate}
\end{corollary}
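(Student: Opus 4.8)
The plan is to invoke Theorem~\ref{semiconvth} and reduce the whole corollary to checking its inequality \eqref{condi2}: since the corollary assumes $A$ and $C$ are PD and PSD, the index-$1$ (linear elementary divisor) requirement is already secured by the theorem, so it only remains to show that each of the four hypotheses forces \eqref{condi2} to hold for every $\lambda\in\sigma(\Gamma_{\EPSS})$ with $|\lambda|=1$ and every $0\neq r\in null(C+C^*)$. The whole argument is thus a sequence of simplifications of \eqref{condi2} under the extra assumption $null(C+C^*)\subseteq null(C)$.

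First I would exploit the splitting $C=C_P+C_S$ with $C_P=\tfrac12(C+C^*)$ Hermitian and $C_S=\tfrac12(C-C^*)$ skew-Hermitian. Fix any $0\neq r\in null(C+C^*)$. Then $C_Pr=\tfrac12(C+C^*)r=0$, and the containment $null(C+C^*)\subseteq null(C)$ gives $Cr=0$; subtracting yields $C_Sr=Cr-C_Pr=0$, hence also $C_S^*r=-C_Sr=0$ and $r^*Cr=0$. Substituting into \eqref{condi2}, the term $r^*C_SP_\beta^{-1}C_Pr$ vanishes because $C_Pr=0$, and the right-hand side $\tfrac{1+\lambda}{1-\lambda}r^*Cr$ vanishes because $r^*Cr=0$ (this is where $\lambda\neq1$ matters, so the factor is well defined). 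Consequently \eqref{condi2} collapses, for every admissible $\lambda$, to the single $\lambda$-independent requirement
\[
r^*\bigl(P_\beta-B_SP_\alpha^{-1}B_P^*\bigr)r\neq0\qquad\text{for all }0\neq r\in null(C+C^*).
\]
This reduction is the heart of the argument, and each of the four parts is then just a way of supplying it.

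Next I would dispatch the four cases against this reduced inequality. For part~\ref{part1}, since $C_S^*r=0$ the extra term in \eqref{condi3} satisfies $r^*C_SP_\beta^{-1}C_S^*r=0$, so \eqref{condi3} is literally the reduced inequality. For part~\ref{part2}, $P_\beta$ is HPD so $r^*P_\beta r>0$, while $r^*B_SP_\alpha^{-1}B_P^*r\leqslant0$ by hypothesis; hence $r^*(P_\beta-B_SP_\alpha^{-1}B_P^*)r\geqslant r^*P_\beta r>0$, which is nonzero. For part~\ref{part3}, the hypothesis gives $B_S^*r=0$ or $B_P^*r=0$ for each such $r$; in either case $r^*B_SP_\alpha^{-1}B_P^*r=(B_S^*r)^*P_\alpha^{-1}(B_P^*r)=0$, so the reduced left-hand side equals $r^*P_\beta r>0$. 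Part~\ref{part4} splits in two: if $C$ is PD then $C+C^*$ is HPD, so $null(C+C^*)=\{0\}$ and the reduced inequality (as well as the containment $null(C+C^*)\subseteq null(C)$) holds vacuously; if $B_S=0$ or $B_P=0$ then $B_SP_\alpha^{-1}B_P^*=0$ and the reduced left-hand side is again $r^*P_\beta r>0$, making part~\ref{part4} a degenerate instance of the earlier cases.

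The only delicate point is the reduction step itself: one must use the full strength of $null(C+C^*)\subseteq null(C)$ to obtain $C_Sr=0$, since the weaker $C_Pr=0$ follows from $r\in null(C+C^*)$ alone but does not by itself kill the skew-Hermitian contribution $r^*C_SP_\beta^{-1}C_Pr$ together with the right-hand side $\tfrac{1+\lambda}{1-\lambda}r^*Cr$. Once $C_Sr=0$, $C_S^*r=0$ and $r^*Cr=0$ are established, the remaining four verifications are routine sign and positivity arguments driven entirely by the HPD property of $P_\beta$. I therefore expect no genuine obstacle beyond bookkeeping; the essential content is packaging Theorem~\ref{semiconvth} so that all $C$-dependent terms disappear on $null(C+C^*)$.
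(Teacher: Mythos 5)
Your overall strategy---reduce everything to checking condition \eqref{condi2} of Theorem~\ref{semiconvth}, since the index-one part of that theorem needs no extra hypotheses---is the right one, and your handling of parts \ref{part2}--\ref{part4} would be routine once the reduction is in place. But the reduction itself contains a genuine error: you take $C_P=\tfrac12(C+C^*)$ and $C_S=\tfrac12(C-C^*)$, i.e.\ the Hermitian/skew-Hermitian splitting of $C$. The paper's setting is more general: $C=C_P+C_S$ is only required to split $C$ into a positive semi-definite part $C_P$ (meaning $C_P+C_P^*$ is HPSD, not $C_P=C_P^*$) and a skew-Hermitian part $C_S$; indeed the SEPSS construction later in the paper takes $C_P=L_C+U_C^*+D_C$, which is not Hermitian. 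Consequently your key deductions $C_Pr=0$ and $C_Sr=0$ for $r\in null(C+C^*)\subseteq null(C)$ are unjustified, and with them both the claim that \eqref{condi2} collapses to $r^*(P_\beta-B_SP_\alpha^{-1}B_P^*)r\neq0$ and the claim that the term $r^*C_SP_\beta^{-1}C_S^*r$ in \eqref{condi3} vanishes.

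The correct reduction is as follows. For $0\neq r\in null(C+C^*)\subseteq null(C)$ one has $Cr=0$ and $C^*r=0$, hence $r^*Cr=0$, $C_Pr=-C_Sr$ and $r^*C_S=(C_S^*r)^*=-(C_Sr)^*$, so that
\[
r^*C_SP_\beta^{-1}C_Pr=(C_Sr)^*P_\beta^{-1}(C_Sr)=r^*C_SP_\beta^{-1}C_S^*r\geqslant 0.
\]
Thus \eqref{condi2} becomes exactly \eqref{condi3}; this is where the $C_SP_\beta^{-1}C_S^*$ term in the corollary comes from, and it cannot be discarded. Part~\ref{part1} is then immediate, and parts \ref{part2}--\ref{part4} follow because $r^*(P_\beta+C_SP_\beta^{-1}C_S^*)r\geqslant r^*P_\beta r>0$ while the right-hand side of \eqref{condi3} is $\leqslant 0$ in part~\ref{part2} and $=0$ in parts \ref{part3} and \ref{part4} (the case $C$ PD being vacuous since then $null(C+C^*)=\{0\}$). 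So your conclusions survive, but only after replacing the false identities $C_Pr=C_Sr=0$ by the correct relations above; as written, your argument for part~\ref{part1} in particular rests on two compensating errors rather than a valid chain of equivalences.
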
	



Let the matrix  $A$  and $C$ be  PD and PSD, respectively.
Moreover, suppose that   $P_\alpha$ and $P_\beta$ are two arbitrary HPD matrices.
If $null (C+C^*) \subseteq null (C)$ and one of the matrices $B_S$ or $B_P$ is equal to 0, by Corollary \ref{semiconvcor},   we deduce that $\nu(\Gamma_{\EPSS})<1$.
In this case, we present some special cases of the $\mathfrak{P}_{\EPSS}$ preconditioner.
\begin{enumerate}
	\item
	In \eqref{orgP}, let   $B_P=0$ and $C$ is HPSD.
	Therefore,    $\mathfrak{P}_{\EPSS}$  preconditioner   turns into the following preconditioner which is is an extension of the  HSS \cite{benzi262004} and PSS(DPSS) \cite{pan1722006}:
	\begin{equation}
	\label{mainHSS}
	\mathfrak{P}_{\EPSS}=  
	\begin{bmatrix}{(P_\alpha +A_P)P_\alpha^{-1} }&{0}\\{0}&{(P_\beta +C_P)P_\beta^{-1} }  \end{bmatrix}
	\begin{bmatrix}{P_\alpha +A_S  }&{B^*}\\{-B}&{P_\beta+C_S }\end{bmatrix}.
	\end{equation}
	\begin{enumerate}
		\item
		Let $C_S=0$ and $A_P=\frac{1}{2}(A+A^*)$. 
		\begin{enumerate}
			\item  
			Suppose that $P_\alpha =\alpha I$.
			If $P_\beta=\alpha I$,  then the preconditioner  \eqref{mainHSS} coincides with the HSS
			\cite{benzi262004,simoncini262004,bai892010}  which is denoted by   $\mathcal{P}_{\rm HSS}$ and if  $P_\beta=\beta I$, then the preconditioner  \eqref{mainHSS} coincides with the generalized HSS which we  denote  by  $\mathcal{P}_{\rm GHSS}$.
			\item 
			Let $P_\alpha =\alpha Q_1$ and  $P_\beta=\beta Q_2$,  where $\alpha>0$, $\beta>0$ and matrices $Q_1$ and $Q_2$ are HPD matrices. 
			Then the preconditioner  \eqref{mainHSS} coincides with the EHSS  which we denote by   $\mathcal{P}_{\rm EHSS}$.
			\item 
			Let $C=0$, $P_\alpha =\alpha I$ and $P_\beta =\alpha I+  Q $, where $Q$ is an HPD? matrix. 
			Then the preconditioner  \eqref{mainHSS} coincides with the regularized   HSS  (RHSS)  \cite{bai572017, chao762018} which is denoted by   $\mathcal{P}_{\rm RHSS}$.
		\end{enumerate}
		\item
		Let $C_S=0$   and  $A_P=A$.
		\begin{enumerate}
			\item  
			Suppose that $P_\alpha =\alpha I$ 
			and  $P_\beta=\alpha I$.   Then the preconditioner  \eqref{mainHSS} coincides with the PSS (or DPSS	\cite{pan1722006,shen2372014})
			which is denoted by   $\mathcal{P}_{\rm PSS}$ (or $\mathcal{P}_{\rm DPSS}$) and if  $P_\beta=\beta I$, then the preconditioner  \eqref{mainHSS} coincides with the generalized PSS \cite{wang2982016,huang752017} which we  denote  by  $\mathcal{P}_{\rm GPSS}$.
			\item 
			If $P_\alpha =\alpha Q_1$ and  $P_\beta=\beta Q_2$,  where  matrices $Q_1$ and $Q_2$ are HPD matrices, 
			then the preconditioner  \eqref{mainHSS} coincides with the EPSS  which we denote by   $\mathcal{P}_{\rm EPSS}$.
			A special case of this preconditioner is PDPSS preconditioner in  \cite{liang782018} where  $C=0$ and $B$  is a rank deficient matrix.
			\item 
			Let $C=0$, $P_\alpha =\alpha I$ and  $P_\beta =\alpha I+ Q $, where $Q$ is an HPD? matrix. 
			Then the preconditioner  \eqref{mainHSS} coincides with the regularized   PSS  (RPSS)  \cite{cao842018}  which is denoted by   $\mathcal{P}_{\rm RHSS}$.
			\item 
			Let $C=0$, $A_S=0$ and   $P_\alpha =\alpha A$. If  $P_\beta =\alpha Q$ then the preconditioner  \eqref{mainHSS} reduces to the 	PHSS  preconditioner in   \cite{bai982004} and if    $P_\beta =\beta Q$    then the preconditioner  \eqref{mainHSS} coincides with the 	AHSS  preconditioner in     \cite{bai272007}.
		\end{enumerate}
	\end{enumerate}
	\item
	In \eqref{orgP}, let  $B_S=0$ and $C$ be HPSD. So, the  ${\EPSS}$ preconditioner  turns into an extension of the  shift splitting preconditioner as
	\begin{equation}
	\label{mainSS}
	\mathfrak{P}_{\EPSS}=  
	\begin{bmatrix}{P_\alpha +A_P  }&{B^*}\\{-B}&{P_\beta+C_P }  \end{bmatrix}
	\begin{bmatrix}{P_\alpha^{-1}(P_\alpha +A_S) }&{0}\\{0}&{P_\beta^{-1}(P_\beta +C_S) }\end{bmatrix}.
	\end{equation}
	Let $C_S=0$ and $A_P=A$.
	Suppose that $P_\alpha =\alpha I$.
	If $P_\beta=\alpha I$,  then the preconditioner  \eqref{mainSS} coincides with the shift splitting  (SS) preconditioner
	which is denoted by   $\mathcal{P}_{\rm SS}$ and if  $P_\beta=\beta I$, then the preconditioner  \eqref{mainSS} reduces to the generalized  SS preconditioner  which we  denote  by  $\mathcal{P}_{\rm GSS}$. 
	Moreover, if $P_\alpha =\alpha Q_1$ and  $P_\beta=\beta Q_2$,  where  matrices $Q_1$ and $Q_2$ are HPD matrices, then the preconditioner  \eqref{mainSS} coincides with the ESS  preconditioner which we denote by   $\mathcal{P}_{\rm ESS}$.
	In  \cite{zheng3132017}, it has been shown that   if $AP_{\alpha}=P_{\alpha}A$, then the spectral radius of  ESS  iterative method is less than 1 but this condition is not necessary. 
	\begin{enumerate}
		\item  When $A$ is  HPD,  
		the  $\mathcal{P}_{\rm SS}$ preconditioner  was studied in 
		\cite{cao2722014} 
		and  $\mathcal{P}_{\rm GSS}$ preconditioner is studied in \cite{ren3112017,chen2692015,salkuyeh482015}. 
		Moreover the  $\mathcal{P}_{\rm ESS}$  preconditioner was presented in \cite{zheng3132017}.
		\item 
		When $A$ is  PD, the $\mathcal{P}_{\rm GSS}$ preconditioner  was studied in 
		\cite{cao712016,  cao492015, shi2692015, shen722016,salkuyeh482015}  
		and $\mathcal{P}_{\rm ESS}$  preconditioner  was studied in \cite{salkuyeh742017}.
	\end{enumerate}
\end{enumerate}
Therefore, by Corollary  \ref{semiconvcor} and Theorem 3.2 in \cite{mas2019},  these preconditioners  can be applied to singular  generalized saddle point problems \eqref{saddle}.

We use the EPSS preconditioner to accelerate the convergence of the  Krylov subspace methods such as the GMRES method to solving the system \eqref{saddle}.
At each step of applying the EPSS preconditioner $\mathfrak{P}_{\EPSS}$  within a Krylov subspace method, it is required to solve
the system of linear equations of the form
\begin{align}
\label{solveepss}
\mathfrak{P}_{\EPSS}
\begin{bmatrix}{y_1}\\{y_2}\end{bmatrix}=
\begin{bmatrix}{x_1}\\{x_2}\end{bmatrix},
\end{align}
where $x= \left[{x_1};{x_2}\right]$ is a given vector. There are different ways to solve this system. In the sequel we consider a special case of the EPSS (SEPSS) preconditioner and present its implementation. 

Let $C=D_C +L_C+U_C$, where the matrices $D_C$, $L_C$ and $U_C$ are  the block diagonal, strictly block lower triangular  and strictly
block upper triangular parts of the matrix A, respectively.   
We set 
\[
A_P=A, \quad C_P= L_C+U_C^*+D_C\quad {\rm and} \quad B_P=B.
\]
In this case, we have $A_S=0$,  $B_S=0$ and $C_P$ is block lower triangular PD matrix  and the preconditioner \eqref{orgP} takes a special preconditioner (SEPSS) as  the following form
\begin{equation}\label{EqFac}
\mathfrak{P}_{\SEPSS}= 
\begin{bmatrix}{A+P_\alpha}&{B^*}\\{-B}&{C_P+P_\beta} \end{bmatrix}
\begin{bmatrix}{I}&{0}\\{0}&{P_\beta^{-1}(C_S+P_{\beta})}\end{bmatrix}. 
\end{equation}

Let  $\Gamma_{\SEPSS}=I-2\mathfrak{P}_{\SEPSS}^{-1}\mathcal{A}$ be  the iteration matrix of the SEPSS iteration method. Using Theorem \eqref{semiconvth} and  Corollary \ref{semiconvcor}, the semi-convergence of the SEPSS method can be deduced.
In the implementation of this preconditioner within  a Krylov subspace method like GMRES  we need solving  systems of the form $\mathfrak P_{\SEPSS} [{y_1};{y_2}] =[{x_1};{x_2}]$. Using the factorization \eqref{EqFac}, the following algorithm can be written for the implementation of the SEPSS method.
\begin{algorithm}	
	\caption	{Solution $\mathfrak P_{\SEPSS} [{y_1};{y_2}] =[{x_1};{x_2}]$.}
	\label{algoritmSEPSS2}
	\begin{enumerate}
		\item{ Solve $(C_P+P_\beta )s_2=x_2$.}\\[-0.75cm]
		\item{ Set $s_1=x_1- B^*s_2$.}\\[-0.75cm]
		\item{ Solve $N y_1=(A+P_\alpha + B^*(C_P+P_\beta)^{-1}B)z_1=s_1$.}\\[-0.75cm]
		\item{ Solve $(C_P+P_\beta )z_2=B y_1+x_2$.}\\[-0.75cm]
		\item{ Solve $(C_S+P_\beta )y_2=P_\beta z_2$.} \\[-0.75cm]
	\end{enumerate}
\end{algorithm}

In Algorithm \ref{algoritmSEPSS2}, four  sub-systems with the coefficient matrices $C_P+P_\beta$, $C_S+P_\beta$ and $N$ should be solved.
If the matrix $P_\beta$ is assumed to be diagonal, then the matrix $C_P+P_\beta $  will be lower triangular and solving the corresponding system  can be accomplished by the forward  substitution.


Let $P_\alpha=\alpha Q_1$ and  $P_\beta=\beta Q_2$ where $Q_1$ and $Q_2$ are HPD matrices and independent of the parameters $\alpha$ and $\beta$. It is suggested to use a small value of $\alpha$ and one of the following values for $\beta$ in the EPSS method (see \cite{mas2019}): 
\begin{equation}
\label{alphbet}
\beta_* =\sqrt{\frac{\norm{\left(BA_\alpha^{-1}B^*+C_P\right)Q_2^{-1} C_{S} }_F}{\norm{Q_2}_F}}, ~~
\beta_{**}= \sqrt[4]{\frac{-\trace\left((BB^*+C_P^*C_P)(Q_2^{-1}C_S^2Q_2^{-1})\right)}{\trace(Q_2^2)}}.
\end{equation}


\section{Numerical examples}
\label{secexamp}

In this section, some numerical experiments  are   presented  to illustrate the effectiveness of the
SEPSS preconditioner for the saddle point problem \eqref{saddle} and the numerical results are compared with those of the  HSS, GHSS, EHSS, PSS, GPSS, and the EPSS preconditioners. 
The shift matrices in the EHSS, EPSS and the SEPSS preconditioners are chosen as following
\begin{equation}\label{PaPb}
P_\alpha=\alpha~\text{diag}(A+A^*)\quad \textrm{and} \quad  P_\beta =\beta \left(10^{-4}I+\text{diag}(C+C^*)\right),
\end{equation}
where $I$ is the identity matrix. Since, in general the matrix $\text{diag}(C+C^*)$ is HPSD, the term $10^{-4}I$ guarantees that the matrix $P_{\beta}$ is HPD. Since the $(1,1)$-block in all the test problems is PD matrix, we set $A_P=A$ in the PSS, GPSS, and EPSS preconditioners.
For the EPSS preconditioner, we set $C_P=C$. All runs are performed in \textsc{Matlab} 2017a on an Intel core(TM) i7-8550U (1.8 GHz) 16G RAM Windows 10 system.

\begin{table}
	\begin{center}
		\caption{Generic properties of the test  matrices $A$, $B$ and $C$ for different problems. }
		\label{Tablesize1}
		\scriptsize
		\begin{tabu}{|c|c|cccc|c|c|} 
			\hline 
			Grids	&  $n  $&   $ m$   &$nnz(A)$&$ nnz(B)$& $nnz(C)$
			\\ 
			\hline 
			$16\times16$ & 578& 256& 3826& 1800& 768
			\\  $32\times32$ & 2178& 1024& 16818& 7688& 3072
			\\  $64\times64$ & 8450& 4096& 70450& 31752& 12288
			\\  $128\times128$ & 33282& 16384& 288306& 129032& 49152
			\\ 	$256\times256$ & 132098& 65536& 1166386& 520200& 196608
			\\	\hline 
		\end{tabu}
	\end{center}
\end{table}
All the preconditioners are used to accelerate the convergence of the restarted GMRES method  with $restart=20$.
In our implementations, the initial guess is chosen to be a zero vector and the iteration is terminated once  the 2-norm of the  system residual is reduced by a factor of $10^9$.
The maximum number of the iterations and the maximum elapsed CPU time are set   $k_{\max} = 1000$  and  $t_{\max}=1000s$, respectively. 
Numerical results are presented in the tables. 
In the tables, ``IT" and ``CPU" stand for the iteration counts and  CPU time, respectively, and
\[
R_k=\frac{\|b-Au_k\|_2}{\|b\|} \quad {\rm and}  \quad E_k=\frac{\|e-u_k\|_2}{\|e\|_2},
\]
where $u_k$ is the computed solution. In the implementation of the preconditioners the LU factorization (resp., the Cholesky factorization in the HPD case) of the coefficient matrices in combination with the approximate minimum degree  reordering (AMD) (resp., symmetric AMD (SYMAMD) in the HPD case)  are used  for solving the sub-systems.
In the  example, the right-hand side vector  $b$   is set to be $b=\mathcal{A}e$,  where  $e= [1, 1,\ldots, 1]^T$.

We consider the Oseen problem
\begin{equation}
\label{oseen}
\begin{cases}
-\nu\Delta \mathbf {u}+ \mathbf w\cdot \nabla  \mathbf u+ \nabla p=\mathbf f, ~~~~ \text{in}~~~ \Omega, \\
~\hspace{2.3cm} \nabla \cdot   \mathbf u=0, ~~~~ \text{in}~~~ \Omega,
\end{cases}
\end{equation}
with suitable boundary conditions on $ \partial\Omega $, where $\Omega\subset \mathbb{R}^2$  is a bounded domain and $ \mathbf w $ is a given divergence free field. The	parameter $ \nu > 0 $ is the viscosity, the vector field $\mathbf u $ stands for the velocity and $p $ represents the pressure.
Here $\Delta$, $\nabla\cdot$ and $\nabla$ stand for the Laplace operator, the divergence operators and the gradient, respectively.
The Oseen problem \eqref{oseen} is obtained from the linearization  of the steady-state Navier-Stokes equation by the Picard iteration where the vector field $\mathbf w$ is the approximation of $\mathbf u$ from the previous Picard iteration.
It is known that many discretization schemes for \eqref{oseen} will lead to a generalized saddle point  problems of the form \eqref{saddle}.
We use the  {stabilized (the Stokes stabilization)} $Q_1-P_0$ finite element method for the leaky lid driven cavity problems on uniform-grids on the unit square, with $\nu=0.01$.	In these cases, the matrix  $A$ is non-symmetric, but it  is positive definite.	


We use the IFISS software package developed by Elman et al.  \cite{elman2007} to generate the linear systems corresponding to  $16 \times 16$, $32 \times 32$, $64\times64$,  $128\times128$ and $256\times256$ grids.
It is noted that  the matrix $A$ is PD,  { $C\neq 0$} and $null(B^*)\cap null(C)\neq 0$.
The generic properties of the test problems  are given in  Table \ref{Tablesize1}.
To generate the preconditioners the parameters $\alpha$ and $\beta$ are set $\alpha=10^{t_\alpha}$  and $\beta=10^{t_\beta}$ where $t_\alpha=t_\beta=-4:0.25:4$ (in \textsc{Matlab} notation).
Numerical results are presented in Table \ref{navstolidq1p01}.
For each grid, the results of a pair of $(\alpha,\beta)$ with minimum number of iterations is reported.
When the number of iterations for some pairs of $(\alpha,\beta)$ are the same, then the minimum CPU time is reported.
In the table, we use   SEPSS$_*$    for  SEPSS  method  with $(\alpha , \beta)=(10^{-4},\beta_*)$ and  SEPSS$_{**}$  for  SEPSS method  with $(\alpha , \beta)=(10^{-4}, \beta_{**})$.

The reported numerical results show that, the SEPSS preconditioner  outperforms the other  methods from both the iteration counts and CPU time point of view.
As the numerical results show  the strategies presented in   \eqref{alphbet} are quite suitable to estimate the optimal values of $\alpha$ and $\beta$.

\begin{table}
	\begin{center}
		\caption{Numerical results for the Oseen problem. }
		\label{navstolidq1p01}
		\scriptsize
		\scalebox{.7}
		{
			\begin{tabu}{|c|c|c|c|c|c|c|c|c|c|c|c|c|c|c|}
				\hline
				&grid &prec.    \navierOTHER{GSS}{GHSS}{GPSS}{ESS}{EHSS}{EPSS}{SS}{HSS}{PSS}\navierSEPSS{SEPSS}{SEPSS$_{*}$}{SEPSS$_{**}$} \\ \hline 
				\multirow[c]{20}{*}{\rotatebox{90}{?\large\bfseries  GMRES}}&\multirow[c]{5}{*}{\bfseries $16\times 16 $}& $t_\alpha$   \navierOTHER{-3.75}{-2.00}{-1.75}{-4.00}{-0.75}{-4.00}{-3.75}{-1.75}{-1.75}\navierSEPSS{-4.00}{-4.00}{-4.00} \\ 
				&&$t_\beta$   \navierOTHER{-3.50}{-1.50}{-1.75}{-3.25}{-0.25}{3.00}{-3.75}{-1.75}{-1.75}\navierSEPSS{0.25}{0.39}{-0.01} \\ 
				&&IT   \navierOTHER{4}{27}{37}{3}{28}{12}{4}{32}{37}\navierSEPSS{10}{11}{10} \\ 
				&&CPU   \navierOTHER{0.00}{0.03}{0.02}{0.00}{0.03}{0.01}{0.00}{0.01}{0.02}\navierSEPSS{0.00}{0.03}{0.02} \\ 
				&&$R_K$   \navierOTHER{4.8e-10}{9.1e-10}{6.6e-10}{2.9e-12}{9.2e-10}{6.3e-10}{2.6e-10}{6.1e-10}{6.6e-10}\navierSEPSS{7.3e-10}{6.8e-10}{3.0e-10} \\ 
				\clnavier
				&\multirow[c]{5}{*}{\bfseries $32\times 32 $}& $t_\alpha$   \navierOTHER{-4.00}{-2.25}{-2.00}{-4.00}{-0.75}{-4.00}{-4.00}{-2.25}{-2.25}\navierSEPSS{-4.00}{-4.00}{-4.00} \\ 
				&&$t_\beta$   \navierOTHER{-4.00}{-2.25}{-2.50}{-4.00}{-0.75}{2.75}{-4.00}{-2.25}{-2.25}\navierSEPSS{0.00}{0.41}{0.14} \\ 
				&&IT   \navierOTHER{5}{41}{46}{3}{41}{19}{5}{41}{48}\navierSEPSS{10}{12}{11} \\ 
				&&CPU   \navierOTHER{0.03}{0.12}{0.10}{0.02}{0.13}{0.04}{0.03}{0.14}{0.05}\navierSEPSS{0.02}{0.04}{0.03} \\ 
				&&$R_K$   \navierOTHER{1.0e-10}{9.7e-10}{9.8e-10}{2.2e-12}{8.7e-10}{1.0e-09}{1.0e-10}{9.7e-10}{8.5e-10}\navierSEPSS{9.6e-10}{8.8e-10}{3.4e-10} \\ 
				\clnavier
				&\multirow[c]{5}{*}{\bfseries $64\times 64 $}& $t_\alpha$   \navierOTHER{-4.00}{-2.50}{-2.50}{-4.00}{-1.25}{-4.00}{-4.00}{-2.50}{-2.50}\navierSEPSS{-4.00}{-4.00}{-4.00} \\ 
				&&$t_\beta$   \navierOTHER{-3.75}{-3.00}{-3.00}{-4.00}{-0.50}{3.00}{-4.00}{-2.50}{-2.50}\navierSEPSS{0.00}{0.41}{0.29} \\ 
				&&IT   \navierOTHER{7}{65}{61}{3}{67}{43}{7}{73}{71}\navierSEPSS{11}{14}{12} \\ 
				&&CPU   \navierOTHER{0.23}{1.32}{0.36}{0.15}{1.92}{0.24}{0.19}{1.34}{0.41}\navierSEPSS{0.14}{0.13}{0.12} \\ 
				&&$R_K$   \navierOTHER{7.1e-10}{8.8e-10}{9.7e-10}{1.1e-11}{8.9e-10}{9.9e-10}{3.4e-10}{9.1e-10}{9.3e-10}\navierSEPSS{9.8e-10}{3.3e-10}{8.5e-10} \\ 
				\clnavier
				&\multirow[c]{5}{*}{\bfseries $128\times 128 $}& $t_\alpha$   \navierOTHER{-4.00}{-2.75}{-2.75}{-4.00}{-1.25}{-4.00}{-4.00}{-2.75}{-3.00}\navierSEPSS{-4.00}{-4.00}{-4.00} \\ 
				&&$t_\beta$   \navierOTHER{-4.00}{-3.75}{-3.75}{-3.50}{-1.25}{2.75}{-4.00}{-2.75}{-3.00}\navierSEPSS{0.25}{0.41}{0.41} \\ 
				&&IT   \navierOTHER{12}{104}{95}{3}{103}{87}{12}{144}{114}\navierSEPSS{13}{15}{15} \\ 
				&&CPU   \navierOTHER{2.01}{12.33}{3.95}{1.05}{13.73}{2.38}{2.33}{14.72}{4.20}\navierSEPSS{0.81}{0.96}{0.86} \\ 
				&&$R_K$   \navierOTHER{6.4e-10}{9.6e-10}{9.5e-10}{1.9e-10}{1.0e-09}{9.9e-10}{6.4e-10}{9.9e-10}{9.3e-10}\navierSEPSS{9.3e-10}{6.4e-10}{6.2e-10} \\ 
				\clnavier
				\multirow[c]{20}{*}{\rotatebox{90}{?\large\bfseries  FGMRES}}&\multirow[c]{5}{*}{\rotatebox{0}{?\bfseries $16\times 16 $}}& $t_\alpha$   \navierOTHER{$-4.00$}{$-2.00$}{$-1.75$}{$-3.00$}{$-0.75$}{$-0.50$}{$-3.00$}{$-1.75$}{$-1.75$}\navierSEPSS{$-3.75$}{$-4.00$}{$-4.00$} \\ 
				&&$t_\beta$   \navierOTHER{$-2.25$}{$-1.25$}{$-1.75$}{$-1.00$}{$-0.25$}{$-0.75$}{$-3.00$}{$-1.75$}{$-1.75$}\navierSEPSS{$0.25$}{$0.39$}{$-0.01$} \\ 
				&&IT   \navierOTHER{$4$}{$20$}{$26$}{$4$}{$20$}{$28$}{$6$}{$23$}{$26$}\navierSEPSS{$10$}{$11$}{$10$} \\ 
				&&CPU   \navierOTHER{$0.16$}{$0.07$}{$0.04$}{$0.15$}{$0.08$}{$0.09$}{$0.33$}{$0.07$}{$0.10$}\navierSEPSS{$0.03$}{$0.09$}{$0.09$} \\ 
				&&$R_K$   \navierOTHER{$7.9e-08$}{$6.9e-08$}{$9.0e-08$}{$8.0e-08$}{$7.7e-08$}{$8.5e-08$}{$1.4e-08$}{$7.6e-08$}{$9.0e-08$}\navierSEPSS{$3.9e-08$}{$4.2e-08$}{$1.9e-08$} \\ 
				\clnavier
				&\multirow[c]{5}{*}{\rotatebox{0}{?\bfseries $32\times 32 $}}& $t_\alpha$   \navierOTHER{$-4.00$}{$-2.00$}{$-2.25$}{$-3.75$}{$-0.75$}{$-0.75$}{$-3.50$}{$-2.25$}{$-2.00$}\navierSEPSS{$-3.75$}{$-4.00$}{$-4.00$} \\ 
				&&$t_\beta$   \navierOTHER{$-3.00$}{$-2.50$}{$-1.75$}{$-0.50$}{$-0.75$}{$-0.25$}{$-3.50$}{$-2.25$}{$-2.00$}\navierSEPSS{$0.00$}{$0.41$}{$0.14$} \\ 
				&&IT   \navierOTHER{$5$}{$29$}{$38$}{$5$}{$28$}{$38$}{$7$}{$30$}{$39$}\navierSEPSS{$10$}{$12$}{$10$} \\ 
				&&CPU   \navierOTHER{$0.65$}{$0.71$}{$0.17$}{$0.39$}{$0.56$}{$0.33$}{$0.74$}{$0.99$}{$0.15$}\navierSEPSS{$0.20$}{$0.26$}{$0.24$} \\ 
				&&$R_K$   \navierOTHER{$8.7e-08$}{$9.3e-08$}{$8.4e-08$}{$9.2e-08$}{$8.8e-08$}{$8.7e-08$}{$8.6e-08$}{$9.4e-08$}{$7.6e-08$}\navierSEPSS{$4.5e-08$}{$5.8e-08$}{$5.8e-08$} \\ 
				\clnavier
				&\multirow[c]{5}{*}{\rotatebox{0}{?\bfseries $64\times 64 $}}& $t_\alpha$   \navierOTHER{$-4.00$}{$-2.25$}{$-2.50$}{$-4.00$}{$-1.00$}{$-1.25$}{$-3.75$}{$-2.50$}{$-2.25$}\navierSEPSS{$-3.50$}{$-4.00$}{$-4.00$} \\ 
				&&$t_\beta$   \navierOTHER{$-2.75$}{$-3.25$}{$-2.00$}{$-0.50$}{$-0.75$}{$0.50$}{$-3.75$}{$-2.50$}{$-2.25$}\navierSEPSS{$0.00$}{$0.41$}{$0.29$} \\ 
				&&IT   \navierOTHER{$8$}{$43$}{$59$}{$6$}{$42$}{$60$}{$9$}{$46$}{$69$}\navierSEPSS{$11$}{$13$}{$12$} \\ 
				&&CPU   \navierOTHER{$5.59$}{$7.43$}{$2.39$}{$4.31$}{$7.25$}{$1.68$}{$6.55$}{$8.13$}{$2.17$}\navierSEPSS{$3.44$}{$4.82$}{$4.23$} \\ 
				&&$R_K$   \navierOTHER{$4.0e-08$}{$9.6e-08$}{$9.9e-08$}{$9.9e-08$}{$9.8e-08$}{$9.6e-08$}{$8.3e-08$}{$9.7e-08$}{$9.7e-08$}\navierSEPSS{$7.7e-08$}{$4.8e-08$}{$4.6e-08$} \\ 
				\clnavier
				&\multirow[c]{5}{*}{\rotatebox{0}{?\bfseries $128\times 128 $}}& $t_\alpha$   \navierOTHER{$-4.00$}{$-2.75$}{$-2.75$}{$-4.00$}{$-1.25$}{$-1.50$}{$-3.75$}{$-2.75$}{$-2.50$}\navierSEPSS{$-3.75$}{$-4.00$}{$-4.00$} \\ 
				&&$t_\beta$   \navierOTHER{$-3.25$}{$-3.25$}{$-2.25$}{$-0.00$}{$-0.75$}{$0.50$}{$-3.75$}{$-2.75$}{$-2.50$}\navierSEPSS{$0.25$}{$0.42$}{$0.41$} \\ 
				&&IT   \navierOTHER{$13$}{$71$}{$106$}{$10$}{$71$}{$108$}{$15$}{$80$}{$139$}\navierSEPSS{$13$}{$14$}{$14$} \\ 
				&&CPU   \navierOTHER{$27.57$}{$47.39$}{$13.43$}{$20.43$}{$43.25$}{$12.38$}{$32.05$}{$46.00$}{$14.69$}\navierSEPSS{$12.38$}{$16.20$}{$14.43$} \\ 
				&&$R_K$   \navierOTHER{$8.3e-08$}{$9.7e-08$}{$9.4e-08$}{$5.9e-08$}{$9.4e-08$}{$9.6e-08$}{$9.2e-08$}{$9.5e-08$}{$9.7e-08$}\navierSEPSS{$6.5e-08$}{$5.0e-08$}{$4.5e-08$} \\ 
				\hline
			\end{tabu}
		}
	\end{center}
\end{table}

\section{Conclusion}

We have investigated the semi-convergence of the extended PSS (EPSS) method for singular saddle pint problems. Then we have applied the a special case of the EPSS preconditioner to accelerate the convergence of the Krylov subspace methods like GMRES. Numerical results showed that the proposed outperforms many existing methods.

\section*{Acknowledgment}
The work of Davod Khojatseh Salkuyeh is partially supported by University of Guilan.

\enddocument